\documentclass[12pt]{amsart}
\usepackage{fullpage}
\allowdisplaybreaks
\usepackage{amsmath,amssymb,amsthm}
\usepackage{mathtools}
\usepackage{verbatim}
\usepackage{enumerate}
\usepackage{hyperref}
\usepackage{dsfont}
\usepackage{color}

\usepackage[margin=1in]{geometry} 
\usepackage[subnum]{cases}

\renewcommand{\bar}{\overline}
\newtheorem{theorem}{Theorem}[section]
\newtheorem{proposition}[theorem]{Proposition}
\newtheorem{lemma}[theorem]{Lemma}
\newtheorem{corollary}[theorem]{Corollary}

\theoremstyle{definition}
\newtheorem{definition}[theorem]{Definition}

\theoremstyle{remark}
\newtheorem*{remark}{Remark}

\usepackage{thmtools}
\newcommand{\nc}{\newcommand}
\nc{\R}{\mathbb R}
\nc{\C}{\mathbb C}
\nc{\F}{\mathbb F}
\nc{\Q}{\mathbb Q}
\nc{\Z}{\mathbb Z}
\nc{\N}{\mathbb N}

\nc{\cT}{\mathcal T}
\nc{\cP}{\mathcal P}
\nc{\cM}{\mathcal M}
\nc{\cC}{\mathcal C}
\nc{\cB}{\mathcal B}
\nc{\cS}{\mathcal S}
\nc{\Mod}{\operatorname{Mod}}
\nc{\Aut}{\operatorname{Aut}}
\nc{\del}{\partial}
\nc{\dd}{\mathrm{d}}
\nc{\inter}{\mathrm{o}}
\nc{\close}[1]{\overline{#1}}
\nc{\pderiv}[2]{\frac{\partial #1}{\partial #2}}
\nc{\tr}{\operatorname{tr}}
\renewcommand{\epsilon}{\varepsilon}

\title{$L^p$ Regularity of Toeplitz Operators on Generalized Hartogs Triangles}

\author{Meijke Balay}
\address[Meijke Balay]{University of California, Los Angeles, Department of Mathematics, Los Angeles, CA 90095, USA}
\email{mysatellite99@ucla.edu}

\author{Trent Neutgens}
\address[Trent Neutgens]{University of Minnesota--Twin Cities, School of Mathematics, Minneapolis, MN 55455, USA}
\email{neutg007@umn.edu}

\author{Nick Rosen}
\address[Nick Rosen]{Carleton College, Department of Mathematics, Northfield, MN 55057, USA}
\email{rosenn@carleton.edu}

\author{Nathan Wagner}
\address[Nathan Wagner]{Washington University in St. Louis, Department of Mathematics and Statistics, St. Louis, MO 63130, USA}
\email{nathanawagner@wustl.edu}

\author{Yunus E. Zeytuncu}
\address[Yunus E. Zeytuncu]{University of Michigan--Dearborn, Department of Mathematics and Statistics, Dearborn, MI 48128, USA}
\email{zeytuncu@umich.edu}

\subjclass[2010]{Primary 32A36; Secondary 32A07}
\keywords{Toeplitz Operators, Generalized Hartogs Triangles, $L^p$ Regularity, Bergman Spaces, pseudoconvex}

\date{}
\begin{document}

\maketitle
\begin{abstract}
We obtain $L^p$ estimates for Toeplitz operators on the generalized Hartogs triangles $\mathbb{H}_\gamma = \{(z_1,z_2) \in \C^2: |z_1|^\gamma < |z_2|<1\}$ for two classes of positive radial symbols, one a power of the distance to the origin, and the other a power of the distance to the boundary.
\end{abstract}

\section{Introduction}
\subsection{Preliminaries}
\par Let $\Omega \subset \C^n$ be a bounded domain and let $L^2(\Omega)$ denote the Hilbert space of square integrable functions equipped with the inner product 
\[\langle f, g \rangle = \int_{\Omega} f(z) \overline{g(z)} \dd V(z)\] where $\dd V$ is the Lebesgue measure. Denote the ring of holomorphic functions on $\Omega$ by $\mathcal{O}(\Omega)$. The \emph{Bergman space} is defined by $A^2(\Omega) = L^2(\Omega) \cap \mathcal{O}(\Omega)$. Similarly $A^p(\Omega) = L^p(\Omega) \cap \mathcal{O}(\Omega).$
$A^2(\Omega)$ is a closed subspace of $L^2(\Omega)$, so that there is the orthogonal projection $\mathbf{B}_\Omega: L^2(\Omega) \to A^2(\Omega)$ called the \emph{Bergman projection}. The Bergman projection is an integral operator
    \[\mathbf{B}_\Omega f(z) = \int_{\Omega} B(z,w) f(w) dV(w)\]
and the kernel $B(z,w)$ is called the \emph{Bergman kernel}. If $\{f_n\}$ is an orthonormal basis for $A^2(\Omega)$ then the Bergman kernel can be represented as the sum 
\[B(z,w) = \sum_{n} f_n(z) \overline{f_n(w)}.\]
For the basic theory of the Bergman projection and proof of these facts see \cite[Ch. 1]{krantz}.\\
\par Let $\phi \in L^\infty(\Omega).$ The \emph{Toeplitz operator with symbol} $\phi$ is the operator $T_\phi$ defined by
\[T_\phi f (z) = \mathbf{B}_\Omega (\phi f)(z) = \int_\Omega B(z,w) f(w)\phi(w) \dd V(w).\]
Toeplitz operators are common objects in functional analysis and have been extensively studied on different function spaces, including Bergman spaces, Hardy spaces, and the Segal-Bargmann space, see e.g. \cite{axtoeplitz,segal,peller}.\\

\par The Hartogs triangle $\{(z_1,z_2) \in \C^2: |z_1|<|z_2|<1\}$ is a pseudoconvex domain that is the source of many counterexamples in several complex variables, see \cite{hartogs}.\newpage 
The \emph{generalized Hartogs triangles} recently studied by Edholm and McNeal \cite{edneal,Edholm} are a class of pseudoconvex domains in $\mathbb{C}^2$ defined for $\gamma >0$ by
\[\mathbb{H}_\gamma = \{(z_1,z_2) \in \C^2: |z_1|^\gamma < |z_2|<1\}.\]
 The generalized Hartogs triangles exhibit the same pathological behaviour as the classical Hartogs triangle due to the singularity of the boundary, which is non-Lipschitz at the origin, with the additonal surprising dependence on the rationality or irrationality of the power $\gamma$.\\
 
\par On the generalized Hartogs triangle we denote the Bergman projection by $\mathbf{B}_\gamma$ and the Bergman kernel as $B_\gamma(z,w)$. $\mathbb{H}_\gamma$ is a Reinhardt domain, meaning $(z_1, z_2) \in \mathbb{H}_\gamma$ implies $(e^{i\theta} z_1, e^{i\theta}z_2) \in \mathbb{H}_\gamma$ for all $\theta \in \R$. Using the Laurent expansion for a holomorphic function, an orthogonal basis for $A^2(\mathbb{H}_\gamma)$ is given by the monomials $\{z^{\alpha} = z_1^{\alpha_1}z_2^{\alpha_2}: \alpha \in \mathcal{A}_{\gamma}\}$, where $\mathcal{A}_{\gamma} = \{\alpha \in \Z^2: z_1^{\alpha_1}z_2^{\alpha_2} \in A^2(\mathbb{H}_{\gamma})\}$ is the set of multi-indices $\alpha = (\alpha_1 , \alpha_2)$ such that $||z_1^{\alpha_1}z_2^{\alpha_2}||_2< \infty$. In particular for $\gamma = \frac{m}{n}$ one can calculate in polar coordinates that \[\mathcal{A}_{m/n} = \{ \alpha \in \Z^2: \alpha_1 \ge 0, \; n\alpha_1 +m \alpha_2 \ge -m-n+1\}.\]
Therefore if $c_\alpha = ||z^\alpha||_2$ we can write the Bergman kernel as the sum
\begin{align}B_\gamma(z,w) = \sum_{\alpha \in \mathcal{A}_\gamma} \frac{z^\alpha \overline{w}^\alpha}{c_\alpha^2}. \label{monom}\end{align}
We write $A \lesssim B$ if there exists a constant $C$ such that $A \le C B$ and we write $A \approx B$ if $A \lesssim B$ and $B \lesssim A$.
\subsection{Main Results}
\par In general,
the regularity of $\mathbf{B}_\Omega$ depends closely on the geometry of $\Omega$. For various geometric conditions on $\Omega$, understanding the range of $p$ for which $\mathbf{B}_\Omega$ is $L^p$ bounded is an active area of research, see \cite{survey} for a comprehensive survey. If $\Omega$ is smooth and strongly pseudoconvex, or smooth, convex, and finite type then $\mathbf{B}_\Omega$ is bounded from $L^p(\Omega)$ to $A^p(\Omega)$ is bounded for all $1<p<\infty$ \cite{stein, mcneal94}. At the other extreme, in \cite{yunus} the author constructs psuedoconvex domains in $\C^2$ where $\mathbf{B}_\Omega$ is bounded if and only if $p=2$. The generalized Hartogs triangles $\mathbb{H}_\gamma$ provide another example of restricted boundedness. In \cite{edneal} it is shown that for $\gamma = \frac{m}{n}$ an irreducible fraction the Bergman projection is bounded on $L^p(\mathbb{H}_\gamma)$ if and only if $\frac{2m+2n}{m+n+1} < p < \frac{2m+2n}{m+n-1}$. Moreover if $\gamma$ is irrational the Bergman projection is bounded if and only if $p =2$.\\
\par In this paper we investigate the regularity of Toeplitz operators on $\mathbb{H}_\gamma$. We consider two classes of Toeplitz operators and show that the $L^p$ boundedness range can only increase the boundedness range for the Bergman projection in the lower range $p<2.$ The first set of symbols $\phi(z_1,z_2) = |z_2|^{\alpha}$ essentially measure the distance to the singularity at the origin. For these symbols, we have the following statement.
\begin{theorem}\label{ToeplitzA}
    Let $T_{\alpha}$ denote the Toeplitz Operator with the symbol $\phi(z_1,z_2) = |z_2|^\alpha$ where $0 \le \alpha \le \frac{m+n-1}{m}$. Then $T_\alpha$ maps $L^p(\mathbb{H}_{m/n})$ to $A^p(\mathbb{H}_{m/n})$ boundedly if and only if\\ $\frac{2m+2n}{m+n+1+m\alpha} < p <\frac{2m+2n}{m+n-1}.$
\end{theorem}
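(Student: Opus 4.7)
The plan is to prove sufficiency via Schur's test and necessity at each endpoint via explicit test functions.

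\textbf{Sufficiency.} I dominate $|T_\alpha f|$ pointwise by the positive operator $\widetilde T_\alpha$ with kernel $|B_{m/n}(z,w)|\,|w_2|^\alpha$ and apply Schur's test with test function $h(z) = |z_2|^s$, for $s$ depending on $p$, $\alpha$, $m$, $n$. The two Schur inequalities
\begin{align*}
\int_{\mathbb{H}_{m/n}} |B_{m/n}(z,w)|\,|w_2|^{\alpha+sp'}\,\dd V(w) &\lesssim |z_2|^{sp'},\\
\int_{\mathbb{H}_{m/n}} |B_{m/n}(z,w)|\,|z_2|^{sp}\,\dd V(z) &\lesssim |w_2|^{sp}
\end{align*}
can be verified by expanding $B_{m/n}$ via the monomial series \eqref{monom}. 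The Reinhardt symmetry of $\mathbb{H}_{m/n}$ reduces each integral to a series in $\beta \in \mathcal{A}_{m/n}$ of elementary beta integrals involving the explicit norms $c_\beta$. Each inequality then translates to a linear constraint on $s$, and these two constraints are simultaneously satisfiable precisely when $p \in \bigl(\tfrac{2m+2n}{m+n+1+m\alpha},\,\tfrac{2m+2n}{m+n-1}\bigr)$.

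\textbf{Necessity.} At the upper endpoint $p = \tfrac{2m+2n}{m+n-1}$, I inherit the failure of $\mathbf{B}_{m/n}$ on $L^p$ from \cite{edneal}. Adapting the Edholm--McNeal counterexample if necessary, I pick $f \in L^p(\mathbb{H}_{m/n})$ with $\mathbf{B}_{m/n} f \notin L^p$ and $f$ supported away from $\{w_2 = 0\}$; then $g := |w_2|^{-\alpha} f \in L^p$ while $T_\alpha g = \mathbf{B}_{m/n} f \notin L^p$, contradicting boundedness of $T_\alpha$. At the lower endpoint $p = \tfrac{2m+2n}{m+n+1+m\alpha}$, the reformulation $T_\alpha = \mathbf{B}_{m/n} \circ M_{|w_2|^\alpha}$ identifies $L^p$ boundedness of $T_\alpha$ with boundedness of $\mathbf{B}_{m/n} : L^p(|w_2|^{-\alpha p}\,\dd V) \to L^p(\dd V)$. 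I construct test functions $g_\epsilon$ that concentrate near the non-Lipschitz origin and are calibrated to the weight $|w_2|^{-\alpha p}$, following the template of the test-function arguments in \cite{edneal}, for which $\|\mathbf{B}_{m/n} g_\epsilon\|_{L^p(\dd V)} / \|g_\epsilon\|_{L^p(|w_2|^{-\alpha p}\,\dd V)}$ diverges sharply at the claimed critical $p$.

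\textbf{Main obstacle.} The sharp Schur estimate. Although Reinhardt symmetry and the monomial series reduce each Schur integral to an explicit beta-function sum, extracting the sharp admissible interval for $s$ requires balancing how the $|w_2|^\alpha$ weight interacts with the two boundary singularities of $\mathbb{H}_{m/n}$---at $|z_2|=1$ and at the origin. The weight moderates only the latter, accounting for the shift by $m\alpha$ in the lower endpoint while leaving the upper endpoint unchanged.
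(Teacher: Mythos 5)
There is a genuine gap in your sufficiency argument: the Schur auxiliary function $h(z)=|z_2|^s$ cannot work. The kernel estimate \eqref{kernelest} shows that $|B_{m/n}(z,w)|$ is singular not only near the origin but along the entire boundary, through the factors $|1-z_2\bar w_2|^{-2}$ and $|z_2^n\bar w_2^n-z_1^m\bar w_1^m|^{-2}$. For a Schur test to close, $h^{-\epsilon}$ must blow up wherever $\int_{\mathbb{H}_{m/n}}|K(z,w)|\,\dd V(w)$ does. Already on the disc, $\int_{\mathbb{D}}|1-z\bar w|^{-2}\dd V(w)\approx \log\frac{1}{1-|z|^2}\to\infty$ as $|z|\to 1$, so the left side of your first Schur inequality diverges as $|z_2|\to 1$ (and as $|z_1|^m\to|z_2|^n$) while the right side $|z_2|^{sp'}$ stays bounded; the inequality fails no matter how $s$ is chosen. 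The paper's proof (Proposition \ref{cd}) instead takes $h(z)=(|z_2|^{2n}-|z_1|^{2m})(1-|z_2|^2)$, a product of the two boundary-defining functions, and folds the weight $|w_2|^\alpha$ into the ``type-$(c,d)$'' exponents rather than into $h$; the shift by $m\alpha$ in the lower endpoint then comes out of the admissible $\epsilon$-range, exactly as you anticipate in your ``main obstacle'' paragraph. A second problem with your route: bounding $|B_{m/n}(z,w)|$ by the termwise absolute values of the monomial series \eqref{monom} effectively replaces $|1-z_2\bar w_2|$ by $1-|z_2||w_2|$ (and similarly for the other factor), which is a strictly lossy bound off the angular diagonal and will not produce the sharp interval of $p$; you need the closed-form subkernel estimate \eqref{kernelest} of Edholm--McNeal.

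Your necessity argument is closer to viable but incomplete. The upper-endpoint reduction $g=|w_2|^{-\alpha}f$ is a nice observation and can be made rigorous: by Reinhardt symmetry, $\mathbf{B}_{m/n}\bigl(\chi_{\{|w_2|\ge\delta\}}\,w_1^{\beta_1}\bar w_2^{\beta_2}\bigr)=C'z_1^{\beta_1}z_2^{-\beta_2}$ with $C'\neq 0$, so a bounded monomial truncated away from the origin does the job. At the lower endpoint, however, you only gesture at a family $g_\epsilon$ ``calibrated to the weight'' without constructing it; the paper avoids this entirely by duality (Lemma \ref{adjointLemma}): $T_\alpha^*=|z_2|^\alpha\mathbf{B}_{m/n}$ sends the bounded monomial $z_1^{\beta_1}\bar z_2^{\beta_2}$, with $\beta_2=-\ell(\beta_1)$ chosen via \eqref{leastexp}, to $C|z_2|^\alpha z_1^{\beta_1}z_2^{-\beta_2}$, whose $L^q$-norm is an explicit one-dimensional integral that diverges precisely for $q\ge\frac{2m+2n}{m+n-1-m\alpha}$, the conjugate of your lower threshold. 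You would need to supply an argument of comparable precision to close the ``only if'' direction at and below $p=\frac{2m+2n}{m+n+1+m\alpha}$.
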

Thus increasing the power $\alpha$ improves the lower range, but not the upper range. This behaviour occurs more generally: if $\phi: \mathbb{H}_{m/n} \to \mathbb{R}$ is any positive bounded radial symbol, i.e. $\phi(z_1,z_2) = \phi(|z_1|,|z_2|)$, then $T_\phi$ does not map $L^p(\mathbb{H}_{m/n})$ to $A^p(\mathbb{H}_{m/n})$ for any $p \ge \frac{2m+2n}{m+n-1}$. For $\mathbb{H}_\gamma$ with $\gamma$ irrational, the same statement holds for all $p>2$, see Corollary \ref{radialthresh} below.

\par We mention that on the regular Hartogs triangle $\mathbb{H}$, a related phenomena was noted in \cite{yunushartog}. The authors showed that for $1<p\le \frac{4}{3}$, the Bergman projection does not map $L^p(\mathbb{H})$ to the weighted space $A^p(\mathbb{H},\omega)$ where $\omega$ is any positive continuous function of $|z_2|$.
The second set of symbols are powers of the distance to the boundary function. In \cite{cucneal} it was shown that if $\Omega \subset \C^n$ is a smoothly bounded strongly pseudoconvex domain, then distance to the boundary symbols had a smoothing effect. More specifically, if $\delta: \Omega \to \R$ is the distance to the boundary function then $T_\eta := T_{\delta^\eta}$ maps $L^p(\Omega)$ to $L^{p+G}(\Omega)$ for some $G\ge 0$ depending on $\eta \ge 0$. Moreover if $\eta$ is large enough, $T_\eta$ maps $L^p(\Omega)$ to $L^r(\Omega)$ for any $ p \le r < \infty$. We have an analogous statement on $\mathbb{H}_{m/n}$. The main difference is that the smoothing is limited by the upper threshold $p = \frac{2m+2n}{m+n-1}$.
\begin{theorem}\label{ToeplitzN}
Let $\eta\geq 0$, $M = \frac{2m+2n}{m+n-1}$, $L = \max\left(\frac{2m+2n}{m+n+1+mn\eta},1\right)$, and $p \in (L, M)$.\\

Consider the Toeplitz operator $T_\eta = T_\phi$ where 
\[\phi(z) = (|z_2|^{n} - |z_1|^{m})^\eta(1-|z_2|^2)^\eta. \] 
Then the following holds:
\begin{enumerate}[(a)]
    \item If $\eta \ge 2\left( \dfrac{1}{p} - \dfrac{1}{M}\right)$, then $T_\eta : L^p(\mathbb{H}_{m/n}) \to L^r(\mathbb{H}_{m/n})$ is a bounded operator for any $r<M$.
    \item If $\eta < 2\left( \dfrac{1}{p} - \dfrac{1}{M}\right)$ then $T_\eta: L^p(\mathbb{H}_{m/n}) \to L^{p+G}(\mathbb{H}_{m/n})$ is a bounded operator, where \[G = \dfrac{p^2}{\frac{2}{\eta} -p}.\]
\end{enumerate}
In this case $p+G < M.$\\
Furthermore, if $p\in(1,L]\cup[M,\infty)$, then $T_\eta$ does not map $L^p(\mathbb{H}_{m/n})$ to $L^r(\mathbb{H}_{m/n})$ boundedly for $r\ge p$.
\end{theorem}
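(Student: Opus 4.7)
The plan is to analyze $T_\eta$ as a positive integral operator. Since $\phi\ge 0$, the pointwise inequality
\[|T_\eta f(z)| \le \int_{\mathbb{H}_{m/n}} |B_{m/n}(z,w)|\,\phi(w)\,|f(w)|\,\dd V(w)\]
reduces the problem to $L^p \to L^r$ boundedness of the positive kernel $|B_{m/n}(z,w)|\phi(w)$. The factorization $\phi = (|z_2|^n - |z_1|^m)^\eta(1-|z_2|^2)^\eta$ matches the two boundary components of $\mathbb{H}_{m/n}$ (the outer boundary $|z_2|=1$ and the singular face $|z_1|^m = |z_2|^n$), and each factor of $\phi$ is expected to damp the corresponding singularity of $B_{m/n}$.

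The first step is to obtain sharp pointwise estimates on $|B_{m/n}(z,w)|$. Using the monomial expansion \eqref{monom}, with $c_\alpha$ computed in polar coordinates via Beta functions, or alternatively via the proper holomorphic covering $(u_1,u_2) \mapsto ((u_1u_2)^n, u_2^m)$ of $\mathbb{H}_{m/n}$ by $\mathbb{D}^2$, one can show $|B_{m/n}(z,w)|$ is controlled by integer powers of $|1-z_2\bar w_2|^{-1}$ and $|z_2^n\bar w_2^n - z_1^m\bar w_1^m|^{-1}$ times polynomial factors. Using this, I would prove a Forelli--Rudin type estimate
\[\int_{\mathbb{H}_{m/n}} \frac{(1-|w_2|^2)^s (|w_2|^n-|w_1|^m)^t}{|1-z_2\bar w_2|^{A}\,|z_2^n\bar w_2^n - z_1^m\bar w_1^m|^{B}}\,\dd V(w) \lesssim \frac{1}{(1-|z_2|^2)^{A'}(|z_2|^n-|z_1|^m)^{B'}},\]
with exponents $A', B'$ determined explicitly by $s, t, A, B, m, n$. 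The quickest route is to pull back to the bidisc via the covering map and apply the classical Forelli--Rudin integral.

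With this estimate in hand, I would run a Schur-type test (in its $L^p\to L^r$ form) with trial functions $h(z) = (1-|z_2|^2)^a(|z_2|^n-|z_1|^m)^b$; the parameters $a,b$ must satisfy two dual inequalities derived from the Forelli--Rudin bound. In case (a), when $\eta \ge 2(1/p - 1/M)$, the weight $\phi$ is strong enough that $(a,b)$ can be tuned so Schur's test succeeds for every $r<M$. In case (b), the optimal $(a,b)$ saturate the Schur inequalities at $r = p+G$ with $G = p^2/(2/\eta-p)$; the inequality $p+G = 2p/(2-p\eta) < M$ then follows by a direct computation from $\eta < 2(1/p-1/M)$.

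For the unboundedness statement, I would exploit that $T_\eta$ is diagonal on the monomial basis, $T_\eta z^\alpha = \lambda_\alpha z^\alpha$ with $\lambda_\alpha = c_\alpha^{-2}\int_{\mathbb{H}_{m/n}} |w^\alpha|^2 \phi(w)\,\dd V(w)$ a computable ratio of Beta integrals. Testing $f = z^\alpha$ against the putative bound $\|T_\eta f\|_r \lesssim \|f\|_p$ and letting $\alpha$ run along sequences in $\mathcal{A}_{m/n}$ approaching the boundary of the $L^p$-integrability region (the line $n\alpha_1+m\alpha_2 = -m-n+1$ for $p \ge M$, and an appropriate direction at infinity for $p \le L$) forces $\lambda_\alpha\|z^\alpha\|_r/\|z^\alpha\|_p \to \infty$. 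The main obstacle I anticipate is the careful bookkeeping in the Forelli--Rudin and Schur computations, and particularly verifying that the critical threshold $\eta = 2(1/p - 1/M)$ separates cases (a) and (b) cleanly, with the exact formula $G = p^2/(2/\eta - p)$ emerging from balancing the dual Schur inequalities at the optimal output exponent.
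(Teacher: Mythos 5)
Your boundedness argument follows essentially the same route as the paper: treat $T_\eta$ as a positive integral operator with kernel $|B_{m/n}(z,w)|\phi(w)$, invoke the Edholm--McNeal pointwise bound on $B_{m/n}$, reduce the resulting integrals to a Forelli--Rudin type estimate on the disc, and run an $L^p\to L^r$ Schur test with an auxiliary function built from $(1-|z_2|^2)$ and $(|z_2|^{2n}-|z_1|^{2m})$. The paper's specific device is to raise the weighted kernel to a power $\rho\le\tfrac{2}{2-\eta}$ and show that $|K_\eta|^\rho$ still obeys a type-$(c,d)$ bound with denominator exponents reduced back to $2$; the constraint $\eta\ge\tfrac{2(\rho-1)}{\rho}$ is exactly what produces the threshold $\eta=2(\tfrac1p-\tfrac1M)$ and the formula $G=p^2/(\tfrac2\eta-p)$. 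Your sketch is compatible with this, although the proper covering of $\mathbb{H}_{m/n}$ by the bidisc that you invoke does not exist cleanly for general $m/n$ (Edholm--McNeal instead split the kernel into $m$ subkernels); this is a repairable detail since the kernel bound can simply be quoted.

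The unboundedness argument, however, has a genuine gap. Testing on holomorphic monomials cannot prove any of the claimed negative results when $r=p$: since $\phi>0$ is bounded, $T_\eta z^\alpha=\lambda_\alpha z^\alpha$ with $0<\lambda_\alpha\le\|\phi\|_\infty$, so $\|T_\eta z^\alpha\|_p\le\|\phi\|_\infty\,\|z^\alpha\|_p$ for every admissible monomial, and the quantity you want to send to infinity is uniformly bounded. For $r>p$ the test succeeds only when one of the finitely many critical exponents $\tfrac{2m+2n}{k}$, $1\le k\le m+n-1$, happens to lie in $(p,r]$, and it gives nothing in the lower range $p\le L$, where every monomial in $\mathcal{A}_{m/n}$ and its image already lie in the target space (in that regime one can check that $\lambda_\alpha\,\|z^\alpha\|_r/\|z^\alpha\|_p$ stays bounded along the relevant sequences). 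The key idea you are missing is to test on the bounded \emph{non-holomorphic} monomial $f=z_1^{\beta_1}\bar z_2^{\beta_2}$ with $\beta_2=-\ell(\beta_1)$: by Lemma \ref{RadialToeplitzPolynomial}, $T_\eta f=Cz_1^{\beta_1}z_2^{-\beta_2}$ acquires a negative power of $z_2$ and just fails to lie in $L^p$ for $p\ge M$, while the lower range is handled by duality (Lemma \ref{adjointLemma}), applying $T_\eta^*$ to the same test function. Without some such non-holomorphic input, your unboundedness proof does not go through.
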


\section{Proof of Theorems}
The $L^p$ regularity of the Bergman Projection on $\mathbb{H}_{m/n}$ relies on kernel estimates obtained in \cite{Edholm,edneal}. The rationality of $\gamma = \frac{m}{n}$ allows one to split the Bergman kernel into $m$ subkernels that can be explicitly computed. This gives us the crucial estimate
\begin{align}\label{kernelest}
    |B_{m/n}(z,w)| \lesssim \frac{|z_2 \bar{w}_2|^{2n -1 + \frac{1-n}{m}}}{|1-z_2 \bar{w}_2|^2|z_2^n\bar{w}_2^n - z_1^m\bar{w}_1^m|^2}.
\end{align}

\begin{definition}
Let $\mathcal{K}$ be an integral operator on $\mathbb{H}_{m/n}$ with kernel $K(z,w)$, i.e. 
\[\mathcal{K}f(z) = \int_{\mathbb{H}_{m/n}}K(z,w) f(w) \dd V(w).\]
We say $\mathcal{K}$ is of type-$(c,d)$ if its kernel satisfies 
\[|K(z_1,z_2,w_1,w_2)| \lesssim \frac{|z_2|^c|\overline{w}_2|^d}{|1-z_2\bar{w}_2|^2|z_2^n\bar{w}_2^n - z_1^m \bar{w}_1^m|^2}\]
uniformly for $(z,w) \in \mathbb{H}_{m/n} \times \mathbb{H}_{m/n}$. This generalizes the ``type-$A$'' operator defined in \cite{edneal}.
\end{definition}

\begin{proposition}\label{cd}
If $\mathcal{K}$ is a type-$(c,d)$ operator, then $\mathcal{K}:L^p(\mathbb{H}_{m/n}) \to L^p(\mathbb{H}_{m/n})$ is bounded if 
\[\dfrac{2n+2m}{dm + 2n+2m -2nm} < p < \dfrac{2n+2m}{2nm - cm}\]
provided the denominators are positive and $dm+2n+2m-2nm > 2nm -cm.$
\end{proposition}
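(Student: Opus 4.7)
The strategy is Schur's test for integral operators with a power-type weight on $|z_2|$. Set $h(z) = |z_2|^s$ for a parameter $s \in \R$ to be determined, and let $p' = p/(p-1)$. Schur's test reduces $L^p$ boundedness of $\mathcal{K}$ to verifying the two pointwise inequalities
\[\int_{\mathbb{H}_{m/n}} |K(z,w)|\, h(w)^{p'}\, \dd V(w) \lesssim h(z)^{p'}, \qquad \int_{\mathbb{H}_{m/n}} |K(z,w)|\, h(z)^{p}\, \dd V(z) \lesssim h(w)^{p}.\]
If borderline logarithmic losses appear, one mildly refines the weight by appending a small $(1-|z_2|^2)^{-\epsilon}(|z_2|^{2n}-|z_1|^{2m})^{-\delta}$ factor and uses the strict inequalities in the stated $p$-range to absorb them.

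To compute the Schur integrals, I exploit the Reinhardt symmetry and pass to polar coordinates $w_j = \rho_j e^{i\phi_j}$. The angular integrals decouple after reindexing $\phi_1 \mapsto n\phi_2 - m\phi_1$ (an $m$-to-one reparametrization of $[0,2\pi]$ leaving the integrand periodic), and they evaluate explicitly via the classical identities
\[\int_0^{2\pi}\frac{\dd\theta}{|1-re^{i\theta}|^2} = \frac{2\pi}{1-r^2}\;\;(r<1), \qquad \int_0^{2\pi}\frac{\dd\theta}{A^2-2AB\cos\theta+B^2} = \frac{2\pi}{A^2-B^2}\;\;(A>B>0).\]
The outcome is the clean factorization
\[\int_{[0,2\pi]^2}\frac{\dd\phi_1\, \dd\phi_2}{|1-z_2\bar w_2|^2\,|z_2^n\bar w_2^n - z_1^m\bar w_1^m|^2} = \frac{(2\pi)^2}{(1-|z_2|^2\rho_2^2)(|z_2|^{2n}\rho_2^{2n}-|z_1|^{2m}\rho_1^{2m})}.\]

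What remains is a two-dimensional integral over the polar wedge $\{0 \le \rho_1 \le \rho_2^{n/m},\, 0<\rho_2<1\}$. The substitution $v = \rho_1^{2m}/\rho_2^{2n} \in [0,1]$ decouples the $\rho_1$ integration: it reduces to a one-variable integral $\int_0^1 v^{1/m-1}(|z_2|^{2n}-|z_1|^{2m}v)^{-1}\, \dd v$ that depends only on $(|z_1|,|z_2|)$, while the $\rho_2$ integration is a standard Forelli--Rudin-type integral against $(1-|z_2|^2 \rho_2^2)^{-1}$. Estimating both via the Forelli--Rudin lemma yields a bound of the form $|z_2|^{E(s,c,d,p,m,n)}$ times boundary-distance factors, where $E$ is linear in $s$.

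Matching this bound to the required $h(z)^{p'}$ (and symmetrically $h(w)^p$ for the adjoint condition) produces two linear constraints on $s$: the first Schur condition gives a lower bound, the second an upper bound. A simultaneous solution $s$ exists precisely when
\[\frac{2n+2m}{dm+2n+2m-2nm} < p < \frac{2n+2m}{2nm-cm},\]
the denominator positivity hypothesis ensuring the bounds are meaningful, and the strict gap hypothesis $dm+2n+2m-2nm > 2nm-cm$ ensuring nonemptyness of the range. The main obstacle is the careful exponent bookkeeping through the angular reduction and the $v$-substitution, together with the borderline Forelli--Rudin bounds near $|z_1|^m = |z_2|^n$ and $|z_2|=1$; these are handled either by the auxiliary weight factors mentioned above or by exploiting the openness of the $p$-interval to leave headroom for small logarithmic losses.
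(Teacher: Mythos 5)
Your proposal has a genuine gap: the Schur test with the pure power weight $h(z)=|z_2|^s$ does not close, and the place where it fails is exactly where the content of the proposition lives. After your (correct) angular factorization, the radial Schur integral contains the factors $\int_0^1 v^{1/m-1}\left(|z_2|^{2n}-|z_1|^{2m}v\right)^{-1}\dd v$ and $\int_0^1 \rho_2^{\,\cdot}\left(1-|z_2|^2\rho_2^2\right)^{-1}\dd\rho_2$, which blow up like $\log\frac{1}{|z_2|^{2n}-|z_1|^{2m}}$ as $z$ approaches the hypotenuse $|z_1|^m=|z_2|^n$ and like $\log\frac{1}{1-|z_2|^2}$ as $|z_2|\to 1$. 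The required majorant $h(z)^{p'}=|z_2|^{sp'}$ stays bounded on both of those boundary pieces, so these are not ``borderline logarithmic losses'' absorbable by openness of the $p$-interval -- the Schur inequality is simply false for every $s$ and every $p$. The refinement you mention in passing, appending $(1-|w_2|^2)^{-\epsilon}(|w_2|^{2n}-|w_1|^{2m})^{-\delta}$, is not a contingency but the entire proof: the paper's auxiliary function is precisely $h(z)=(|z_2|^{2n}-|z_1|^{2m})(1-|z_2|^2)$ raised to the power $-\epsilon$ (no separate $|z_2|^s$ is needed, since $(|z_2|^{2n}-|z_1|^{2m})^{-\epsilon}\approx|z_2|^{-2n\epsilon}$ near the origin), and the stated range of $p$ is read off from the window of admissible $\epsilon$: the lower endpoint $\epsilon\ge 1-c/(2n)$ comes from requiring the leftover power $|z_2|^{c+2n\epsilon-2n}$ to be bounded, and the upper endpoint $\epsilon<\frac{1}{2n}(d+\frac{2n}{m}-2n+2)$ from integrability of $|w_2|^{d+2n/m-2n-2n\epsilon}$ at the origin. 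You assert that ``matching exponents produces two linear constraints on $s$'' whose solvability is equivalent to the claimed interval, but you never carry out this bookkeeping, and with your choice of $h$ it cannot be carried out.

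On the positive side, your exact evaluation of the angular integrals is correct (integrating in $\phi_1$ first renders the second factor's contribution independent of $\phi_2$, so the product does factor), and together with the substitution $v=\rho_1^{2m}/\rho_2^{2n}$ it is a legitimate, somewhat more explicit alternative to the paper's route, which instead substitutes $u=w_1^m/w_2^n$ and invokes the one-variable Forelli--Rudin estimate $\int_{\mathbb{D}}(1-|u|^2)^{-\epsilon}|1-\lambda\bar u|^{-2}|u|^{-\beta}\dd V(u)\lesssim(1-|\lambda|^2)^{-\epsilon}$ twice. If you redo your computation with the test function $\left[(|w_2|^{2n}-|w_1|^{2m})(1-|w_2|^2)\right]^{-\epsilon}$, your radial reduction should recover the paper's constraints and hence the stated range of $p$.
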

 For the proof of Theorem \ref{ToeplitzA} and \ref{ToeplitzN} we use two different variations of Schur's Lemma. Schur's lemma is a common tool to prove boundedness of an integral operator, see e.g. \cite[Theorem 3.6]{zhu}.
\begin{lemma}\label{schurlemma}
    Let $\Omega \subset \C^n$ be a domain and $K$ a positive measurable kernel on $\Omega \times \Omega$. Let $\mathcal{K}$ be the operator with kernel $K$ and suppose that there is a positive auxiliary function $h: \Omega \to \R$ and pairs of numbers $0<a<b$, $0<a'<b'$ such that 
    \begin{align}\mathcal{K}(h^{-\epsilon})(z) &\lesssim h(z)^{- \epsilon} \qquad \text{for all } \epsilon \in (a,b) \label{schur1} \\
    \mathcal{K}(h^{-\epsilon})(w) &\lesssim h(w)^{- \epsilon} \qquad \text{for all } \epsilon \in (a',b') \label{schur2}.
\end{align}
Then $\mathcal{K}:L^p(\Omega) \to L^p(\Omega)$ is bounded for $\dfrac{a'+b}{b} < p < \dfrac{b'+a}{a}.$
\end{lemma}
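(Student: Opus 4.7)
The plan is to apply the classical $L^p$ Schur test with a one-parameter family of auxiliary functions of the form $h^{-t}$, then optimize the parameter against the two given hypotheses. Recall the classical test (see \cite[Theorem 3.6]{zhu}): if there exist a positive function $\phi$ and constants $A,B>0$ such that
\[\int_\Omega K(z,w)\,\phi(w)^{p'}\,\dd V(w) \le A\,\phi(z)^{p'}, \qquad \int_\Omega K(z,w)\,\phi(z)^{p}\,\dd V(z) \le B\,\phi(w)^{p},\]
where $p'=p/(p-1)$, then $\mathcal{K}$ is bounded on $L^p(\Omega)$ with norm at most $A^{1/p'}B^{1/p}$.

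First I would take the ansatz $\phi = h^{-t}$ for a parameter $t>0$ to be chosen. With this choice the two Schur inequalities become
\[\mathcal{K}(h^{-tp'})(z) \lesssim h(z)^{-tp'} \qquad \text{and} \qquad \int_\Omega K(z,w)\,h(z)^{-tp}\,\dd V(z) \lesssim h(w)^{-tp},\]
which are exactly hypothesis \eqref{schur1} at $\epsilon = tp'$ and hypothesis \eqref{schur2} at $\epsilon = tp$. The test therefore applies as soon as I can find $t>0$ with $tp' \in (a,b)$ and $tp \in (a',b')$, i.e.,
\[ t \in \bigl(\tfrac{a}{p'},\tfrac{b}{p'}\bigr) \cap \bigl(\tfrac{a'}{p},\tfrac{b'}{p}\bigr).\]

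The remaining step is elementary: determine for which $p$ this intersection is nonempty. Since $a<b$ and $a'<b'$, the two intra-interval constraints $a/p' < b/p'$ and $a'/p < b'/p$ are automatic, so only the two cross-inequalities $a/p' < b'/p$ and $a'/p < b/p'$ can fail. Substituting $p' = p/(p-1)$ and rearranging, these become $p < (a+b')/a$ and $p > (a'+b)/b$ respectively, which is exactly the range asserted in the statement.

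The proof is essentially bookkeeping; the only point that needs a little care is the asymmetric pairing in the Schur test between $\phi^{p'}$ and $\phi^{p}$. It is this asymmetry that forces \eqref{schur1} and \eqref{schur2} to play different roles and produces the non-symmetric interval $\bigl((a'+b)/b,\ (a+b')/a\bigr)$ rather than a symmetric one; beyond that, there is no genuine obstacle.
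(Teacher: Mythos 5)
Your proposal is correct and follows essentially the same route as the paper: the paper simply writes out the H\"older/Schur argument inline with the test functions $h^{-s}$ and $h^{-sp/q}$, which is your argument with $t = s/p'$, and both reduce to the same nonempty-intersection condition yielding $\frac{a'+b}{b} < p < \frac{b'+a}{a}$.
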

\begin{proof}
Let $q$ be such that $\frac{1}{p} + \frac{1}{q} = 1$. Fix $f \in L^p(\Omega)$ and $s>0$ to be determined. 
\begin{align*}
    \left| \int_\Omega K(z,w) f(w) dV(w)\right|^p & \le  \left( \int_\Omega K(z,w) h(w)^{-s} dV(w) \right)^{p/q} \left( \int_\Omega K(z,w) |f(w)|^p h(w)^{s p/q} dV(w)\right) \\
    &\lesssim h(z)^{-s p/q} \int_\Omega K(z,w) |f(w)|^p h(w)^{sp/q} dV(w).
\end{align*}
Therefore we get 
\begin{align*}
    ||\mathcal{K}f||_p^p & \lesssim \int_\Omega \int_\Omega K(z,w) |f(w)|^p h(w)^{s p/q} h(z)^{-s p/q} dV(w) dV(z) \\
    &= \int_\Omega \left( \int_\Omega K(z,w)h(z)^{-s p/q} dV(z) \right) h(w)^{s p/q} |f(w)|^p dV(w)\\
    & \lesssim \int_\Omega h(w)^{- s p/q}h(w)^{s p/q} |f(w)|^p dV(w)\\
    & = ||f||_p^p.
\end{align*}
This calculation is valid whenever we can apply estimate \eqref{schur1} with $\epsilon = s$ and estimate \eqref{schur2} with $\epsilon = sp/q$. This occurs if $ap/q < b'$ and $bp/q > a'$, or equivalently $\frac{a'+b}{b} < p < \frac{b'+a}{a}$.
\end{proof}
The primary estimate needed to apply Lemma \ref{schurlemma} is proved in \cite[Proposition 4.4]{edneal}. The integral is related to the Bergman projection of $(1-|z|^2)^{-\epsilon}$ on the disc $\mathbb{D}$.
\begin{lemma}\label{discint}
Let $\epsilon \in (0,1)$ and $\beta \in (-\infty,2)$. Then for $z \in \mathbb{D}$,
\[\int_{\mathbb{D}}\frac{(1-|w|^2)^{-\epsilon}}{|1-z \bar{w}|^2 } |w|^{-\beta}\dd V(w)\lesssim (1-|z|^2)^{-\epsilon}.\]
\end{lemma}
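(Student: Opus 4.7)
The goal is to bound the given weighted integral on the disc by $(1-|z|^2)^{-\epsilon}$, which is a Forelli–Rudin type estimate. The plan is to reduce to a one-dimensional integral in the radius, and then control that integral by splitting the domain of integration into pieces where the denominator is well-behaved.

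First I would pass to polar coordinates $w = re^{i\theta}$ and perform the angular integral using the identity
\[\frac{1}{2\pi}\int_0^{2\pi}\frac{d\theta}{|1 - z\bar w|^2} = \frac{1}{1-|z|^2|w|^2},\]
which follows from expanding $(1-z\bar w)^{-1}(1-\bar z w)^{-1}$ as a geometric series and integrating term by term. Since the integrand only depends on $|w|$, this reduces the problem to bounding
\[I(z) := \int_0^1 \frac{(1-r^2)^{-\epsilon}\, r^{1-\beta}}{1-|z|^2 r^2}\,dr.\]
The substitution $u = r^2$ rewrites this as $\tfrac12 \int_0^1 (1-u)^{-\epsilon} u^{-\beta/2}(1-|z|^2 u)^{-1}\,du$.

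Next I would split this integral at $u = 1/2$. On $[0,1/2]$, the factor $(1-u)^{-\epsilon}$ is bounded and $1-|z|^2 u \ge 1/2$, so the piece reduces to $\int_0^{1/2} u^{-\beta/2}\,du$, which is finite precisely because $\beta < 2$. Since $(1-|z|^2)^{-\epsilon} \geq 1$ for $\epsilon > 0$, this piece satisfies the desired bound. On $[1/2,1]$, the factor $u^{-\beta/2}$ is bounded, and after the change of variables $v = 1-u$ the remaining task is to estimate
\[J(z) = \int_0^{1/2} \frac{v^{-\epsilon}}{(1-|z|^2) + |z|^2 v}\,dv.\]

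The key step is to estimate $J(z)$ by $(1-|z|^2)^{-\epsilon}$. Writing $\delta = 1-|z|^2$, when $\delta \geq 1/2$ the denominator is bounded below by $\delta/2$ and the bound is trivial. For $\delta < 1/2$ I would split at $v = \delta$: on $[0,\delta]$ use $(1-|z|^2) + |z|^2 v \geq \delta$, giving a contribution $\lesssim \delta^{-\epsilon}/(1-\epsilon)$; on $[\delta,1/2]$ use $(1-|z|^2) + |z|^2 v \geq v/2$, giving $\lesssim \int_\delta^{1/2} v^{-\epsilon-1}\,dv \lesssim \delta^{-\epsilon}/\epsilon$. Combining these estimates yields $I(z) \lesssim (1-|z|^2)^{-\epsilon}$, as required.

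The main technical step is the two-scale splitting in $v$ at the threshold $v = \delta$; once this is set up the rest of the argument is routine. The assumption $\epsilon \in (0,1)$ is used so that both $(1-u)^{-\epsilon}$ is integrable near $u=1$ and the antiderivative $v^{-\epsilon-1}$ produces the correct exponent, while $\beta < 2$ is used exclusively to control the integrand near the origin.
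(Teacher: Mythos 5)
Your proof is correct, but it is worth noting that the paper does not actually prove this lemma: it is quoted directly from Edholm--McNeal \cite[Proposition 4.4]{edneal}, so your argument is a self-contained replacement for a citation. Your route is clean and elementary: the exact angular identity $\frac{1}{2\pi}\int_0^{2\pi}|1-z\bar w|^{-2}\,d\theta = (1-|z|^2|w|^2)^{-1}$ (valid precisely because the exponent in the denominator is $2$, which is the case at hand) collapses the problem to a one-dimensional radial integral, after which the splitting at $u=1/2$ isolates the role of $\beta<2$ (integrability of $u^{-\beta/2}$ at the origin) from the role of $\epsilon\in(0,1)$ (the two-scale estimate of $J(z)$ at the threshold $v=\delta=1-|z|^2$). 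All the individual inequalities check out: on $[\delta,1/2]$ you implicitly use $|z|^2=1-\delta>1/2$ to get the lower bound $v/2$ on the denominator, which is legitimate since you are in the case $\delta<1/2$. The only caveat, which is harmless here, is that your implicit constants blow up as $\epsilon\to 0^+$ or $\epsilon\to 1^-$ (the factors $1/\epsilon$ and $1/(1-\epsilon)$); the lemma as stated allows the constant to depend on $\epsilon$ and $\beta$, and in the application the Schur test fixes a single $\epsilon$ in an open interval, so nothing is lost. Compared with invoking the standard Forelli--Rudin estimates (e.g.\ \cite[Theorem 1.7]{zhu}) and then grafting on the $|w|^{-\beta}$ factor, your direct computation is arguably more transparent, at the cost of being tied to the specific exponent $2$.
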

Now, we can begin the proof of Proposition \ref{cd} in earnest. 
\begin{proof}[Proof of Proposition \ref{cd}]
We verify the estimates in Schur's lemma with the auxiliary function \[h(z) = (|z_2|^{2n} - |z_1|^{2m})(1-|z_2|^2).\]
The proof is the same as \cite[Proposition 4.2]{edneal} except for the exponents.\\

Let $\mathcal{K}$ be an operator of type-$(c, d)$ on $\mathbb{H}_{m/n}$ and assume that $dm+2n+2m-2mn>2mn-cm>0$.\\

For some $\epsilon$ to be determined,
\begin{align*}
    \mathcal{K}(h^{-\epsilon})(z)&\lesssim\int_{\mathbb{H}_{m/n}}\frac{|z_2|^c|w_2|^d\left(|w_2|^{2n}-|w_1|^{2m}\right)^{-\epsilon}\left(1-|w_2|^2\right)^{-\epsilon}}{|1-z_2\bar{w_2}|^2|z_2^n\bar{w_2}^n-z_1^m\bar{w_1}^m|^2}\dd V(w)\\
    &=\int_{\mathbb{D}^*}\frac{|z_2|^c|w_2|^d\left(1-|w_2|^2\right)^{-\epsilon}}{|1-z_2\bar{w_2}|^2}\\
    &\:\:\:\:\:\times\left[\int_W \frac{\left(|w_2|^{2n}-|w_1|^{2m}\right)^{-\epsilon}}{|z_2^n\bar{w_2}^n-z_1^m\bar{w_1}^m|^2}\dd V(w_1)\right]\dd V(w_2)
\end{align*}
where $\mathbb{D}^* = \{w_2:0<|w_2|<1\}$ and $W = \{w_1:|w_1|<|w_2|^{n/m}\}$ with $w_2$ fixed. Denoting the integral in brackets by $I$, we have
\begin{equation*}
    I = \frac{1}{|z_2|^{2n}|w_2|^{2n+2n\epsilon}}\int_W\left(1-\left|\frac{w_1^m}{w_2^n}\right|^2\right)^{-\epsilon}\left|1-\frac{z_1^m\bar{w_1}^m}{z_2^n\bar{w_2}^n}\right|^{-2}\dd V(w_1).
\end{equation*}
We make the substitution $u=\frac{w_1^m}{w_2^n}$. Lemma \ref{discint} yields
\begin{align*}
    I&=\frac{|w_2|^{2n/m-2n-2n\epsilon}}{m|z_2|^{2n}}\int_{\mathbb{D}}\frac{(1-|u|^2)^{-\epsilon}}{\left|1-z_1^mz_2^{-n}\bar{u}\right|^2}|u|^{2/m-2}\dd V(u)\\
    &\lesssim \frac{|w_2|^{2n/m-2n-2n\epsilon}}{|z_2|^{2n}}\left(1-\left|\frac{z_1^m}{z_2^n}\right|^2\right)^{-\epsilon}\\
    &= \frac{|w_2|^{2n/m-2n-2n\epsilon}}{|z_2|^{2n-2n\epsilon}}\left(|z_2|^{2n}-|z_1|^{2m}\right)^{-\epsilon}.\\
\end{align*}
Thus, we have
\begin{align}
    \mathcal{K}(h^{-\epsilon})(z) &\lesssim |z_2|^{c+2n\epsilon -2n} (|z_2|^{2n} - |z_1|^{2m})^{-\epsilon} \int_{\mathbb{D}^*}  \dfrac{(1-|w_2|^2)^{-\epsilon}}{|1-z_2\bar{w}_2|^2} |w_2|^{d+2n/m -2n -2n\epsilon} \dd V(w_2).\label{est1}
\end{align}
To estimate the integral with Lemma \ref{discint} we need $d + 2n/m -2n -2n\epsilon > -2.$ Thus 
\[\epsilon < \frac{1}{2n}\left( d + \frac{2n}{m} - 2n+2\right).\]
Assuming this holds, \eqref{est1} becomes
\begin{align*}
    \mathcal{K}(h^{-\epsilon})(z) &\lesssim |z_2|^{c+2n\epsilon -2n}(|z_2|^{2n} - |z_1|^{2m})^{-\epsilon}(1-|z_2|^2)^{-\epsilon}\\
    &= |z_2|^{c+2n\epsilon-2n} h(z)^{-\epsilon}.
\end{align*}
To get a uniform bound we need $c+2n\epsilon -2n \ge 0$, so $\epsilon \ge 1-c/2n$. Thus we may take $a = 1-c/2n$ and $b = \frac{1}{2n}(d+\frac{2n}{m} -2n+2)$.\\

The second estimate \eqref{schur2} is almost identical, except $z$ and $w$ are switched. We get the same range of $\epsilon$ but with $c$ and $d$ swapped, i.e.
\[1-\frac{d}{2n} \le \epsilon < \frac{1}{2n} \left( c + \frac{2n}{m} - 2n +2 \right).\]
Taking $a' = 1-d/2n$ and $b' = \frac{1}{2n}(c +\frac{2n}{m} -2n +2)$ and plugging this into Lemma \ref{discint} we get the desired range of $p$.
\end{proof}

\subsection{Proof of Theorem \ref{ToeplitzA}}
Since the Bergman projection is a type-$(A,A)$ operator where $A = 2n - 1 + \frac{1-n}{m}$ it follows that $T_\alpha$ is a type-$(A,A+\alpha)$ operator. Proposition \ref{cd} then shows that $T_\alpha$ is $L^p$ bounded for $\frac{2m+2n}{m+n+1+m\alpha} <p<\frac{2m+2n}{m+n-1}.$\\

The proof of unboundedness is based on the following two lemmas. The first is standard, see \cite[Theorem 1.9]{zhu}.
\begin{lemma}\label{adjointLemma}
Let $\Omega \subset \C^n$ be a bounded domain and $p \ge 1$. If $T:L^p(\Omega) \to L^p(\Omega)$ is a bounded linear operator, then its adjoint $T^*$ is a bounded linear operator $T^*:L^q(\Omega) \to L^q(\Omega)$ where $\frac{1}{p}+\frac{1}{q} =1$ with the convention that $q = \infty$ if $p=1$.
\end{lemma}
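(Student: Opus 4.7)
The statement is a standard functional-analytic duality fact, so the plan is to give the textbook argument with just enough detail to make sure the conventions match the ones used elsewhere in the paper.

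The first step is to identify the dual of $L^p(\Omega)$. Since $\Omega \subset \C^n$ is bounded, the Lebesgue measure on $\Omega$ is finite and in particular $\sigma$-finite, so for $1 \le p < \infty$ the usual duality isomorphism $(L^p(\Omega))^* \cong L^q(\Omega)$ holds, where $1/p + 1/q = 1$ (with $q = \infty$ when $p = 1$), implemented by the sesquilinear pairing $\langle f, g \rangle = \int_\Omega f \overline{g}\, \dd V$ already fixed in the introduction.

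Next, I would define $T^*$ via this pairing: for $g \in L^q(\Omega)$, the linear functional $f \mapsto \langle Tf, g\rangle$ on $L^p(\Omega)$ satisfies
\[
|\langle Tf, g\rangle| \le \|Tf\|_p \|g\|_q \le \|T\|_{L^p \to L^p}\, \|f\|_p \|g\|_q,
\]
by H\"older's inequality and the hypothesis that $T$ is bounded on $L^p$. Hence this functional is bounded with norm at most $\|T\|_{L^p \to L^p} \|g\|_q$, and by the duality $(L^p)^* \cong L^q$ there is a unique element $T^* g \in L^q(\Omega)$ representing it, i.e.\ $\langle Tf, g\rangle = \langle f, T^* g\rangle$ for every $f \in L^p(\Omega)$.

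Finally, I would extract the operator norm bound by taking suprema: for any $g \in L^q(\Omega)$,
\[
\|T^* g\|_q \;=\; \sup_{\|f\|_p = 1}\, |\langle f, T^* g\rangle| \;=\; \sup_{\|f\|_p = 1}\, |\langle Tf, g\rangle| \;\le\; \|T\|_{L^p \to L^p}\, \|g\|_q,
\]
which shows $T^* : L^q(\Omega) \to L^q(\Omega)$ is bounded (with norm at most, in fact equal to, $\|T\|_{L^p \to L^p}$). There is no real obstacle here; the only thing worth checking is that in the $p = 1$ case one is entitled to $(L^1)^* = L^\infty$, which is where the $\sigma$-finiteness of $\Omega$ is used, and that the complex-conjugate convention in $\langle \cdot,\cdot\rangle$ does not affect the argument (it only introduces a conjugation on $g$ which is irrelevant to norm estimates).
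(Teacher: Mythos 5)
Your proof is correct and is exactly the standard duality argument; the paper itself gives no proof of this lemma, simply citing it as standard (Zhu, \emph{Operator theory in function spaces}, Theorem 1.9), and your write-up supplies that textbook argument, including the two points worth being careful about (the $\sigma$-finiteness needed for $(L^1)^* = L^\infty$ and the irrelevance of the conjugate-linear convention in the pairing). No gaps.
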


\begin{remark}Note that if $T_\phi$ is a Toeplitz operator on a domain $\Omega$, the adjoint is given by $T_\phi^* f = \overline{\phi} \mathbf{B}_\Omega(f)$ by self-adjointness of the Bergman projection.
\end{remark}

\begin{lemma}\label{RadialToeplitzPolynomial}
Let $\phi:\mathbb{H}_\gamma \to \R$ be any bounded radial function and suppose $(\beta_1,\beta_2) \in \mathcal{A}_\gamma$ and $(\beta_1, -\beta_2) \in \mathcal{A}_\gamma$. Then 
\[T_\phi(z_1^{\beta_1}\bar{z}_2^{\beta_2}) = Cz_1^{\beta_1}z_2^{-\beta_2}\]
for some constant $C$.
\end{lemma}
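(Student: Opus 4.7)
The plan is to expand $T_\phi(z_1^{\beta_1}\bar{z}_2^{\beta_2})$ with respect to the monomial orthonormal basis of $A^2(\mathbb{H}_\gamma)$ and observe that the rotational symmetry of the radial symbol $\phi$ forces all but one Fourier mode to vanish. Since $(\beta_1,\beta_2) \in \mathcal{A}_\gamma$, the function $z_1^{\beta_1}\bar{z}_2^{\beta_2}$ has the same pointwise modulus as $z_1^{\beta_1} z_2^{\beta_2}$ and hence lies in $L^2(\mathbb{H}_\gamma)$, so boundedness of $\phi$ makes $\phi \cdot z_1^{\beta_1}\bar{z}_2^{\beta_2}$ an element of $L^2(\mathbb{H}_\gamma)$ on which $\mathbf{B}_\gamma$ is defined.

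Using the orthonormal basis $\{z^\alpha/c_\alpha : \alpha \in \mathcal{A}_\gamma\}$, I would write
\[T_\phi(z_1^{\beta_1}\bar{z}_2^{\beta_2})(z) = \sum_{\alpha \in \mathcal{A}_\gamma} \frac{z^\alpha}{c_\alpha^2} \int_{\mathbb{H}_\gamma} \phi(w)\, w_1^{\beta_1} \bar{w}_2^{\beta_2}\, \bar{w}_1^{\alpha_1}\bar{w}_2^{\alpha_2}\, \dd V(w),\]
with convergence in $A^2(\mathbb{H}_\gamma)$. Passing to polar coordinates $w_j = r_j e^{i\theta_j}$, the angular part of the integrand is $e^{i(\beta_1 - \alpha_1)\theta_1}\, e^{-i(\alpha_2 + \beta_2)\theta_2}$, while $\phi(r_1,r_2)$, the volume form, and the domain condition $r_1^\gamma < r_2 < 1$ are all invariant under rotation in each variable. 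Integrating $\theta_1,\theta_2$ over $[0,2\pi)$ therefore kills every term except the one with $\alpha_1 = \beta_1$ and $\alpha_2 = -\beta_2$. By hypothesis $(\beta_1,-\beta_2) \in \mathcal{A}_\gamma$, so this index legitimately appears in the sum, and the surviving term equals $C\, z_1^{\beta_1} z_2^{-\beta_2}$ with
\[C = \frac{1}{c_{(\beta_1,-\beta_2)}^2}\int_{\mathbb{H}_\gamma} \phi(w)\,|w_1|^{2\beta_1}\,\dd V(w).\]

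There is no real obstacle here: the argument amounts to orthogonality of the characters $e^{ik\theta}$ on the Reinhardt domain $\mathbb{H}_\gamma$. No termwise interchange needs to be justified, because we are simply expanding the $A^2$-element $\mathbf{B}_\gamma(\phi z_1^{\beta_1}\bar{z}_2^{\beta_2})$ in its orthonormal basis, whose coefficients are by definition the inner products computed above.
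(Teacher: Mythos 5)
Your proposal is correct and takes essentially the same approach as the paper: pass to polar coordinates and use orthogonality of the characters $e^{ik\theta}$ to kill every mode except $\alpha = (\beta_1,-\beta_2)$. Your variant of first noting $\phi\, z_1^{\beta_1}\bar z_2^{\beta_2} \in L^2$ and reading the coefficients off the orthonormal expansion of the projected function is a slightly cleaner justification than the paper's termwise interchange of the kernel sum with the integral, but the computation is identical.
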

\begin{proof}
Calculate using polar coordinates $w_j = r_j e^{i \theta_j}$, $j=1,2$. Set $H = \{(r_1,r_2): r_1^\gamma < r_2 <1\}$.
\begin{align*}
    T_\phi(z_1^{\beta_1}\bar{z}_2^{\beta_2}) &= \int_{\mathbb{H}_\gamma} B_\gamma(z,w) \phi(w)w_1^{\beta_1}\bar{w}_2^{\beta_2}dV(w)\\
    &= \int_{\mathbb{H}_\gamma} \sum_{\alpha \in A_\gamma} \frac{z^\alpha \bar{w}^\alpha}{c_\alpha^2} \phi(w)w_1^{\beta_1}\bar{w}_2^{\beta_2}dV(w) \\
    &=\sum_{\alpha \in A_\gamma} \frac{z^\alpha}{c_\alpha^2}\int_{\mathbb{H}_\gamma}r_1^{\alpha_1+ \beta_1+1}e^{i\theta_1(\beta_1-\alpha_1)}r_2^{\alpha_2+\beta_2+1}\phi(r_1,r_2)e^{-i\theta_2(\beta_2+\alpha_2)}dr \;d\theta\\
    &= \sum_{\alpha \in A_\gamma} \frac{z^\alpha}{c_\alpha^2} \left( \int_0^{2\pi} e^{i \theta_1 (\beta_1-\alpha_1)}d\theta_1 \right) \left( \int_0^{2\pi} e^{-i \theta_2(\beta_2+\alpha_2)} d\theta_2 \right)\left(\int_H \phi(r_1,r_2)r_1^{\alpha_1+\beta_1+1}r_2^{\alpha_2+\beta_2+1}dr\right)\\
    &=Cz_1^{\beta_1}z_2^{-\beta_2}.
\end{align*}
\end{proof}
\begin{remark}
For $\phi$ and $\beta$ as in the Lemma, we also have
 \[T_\phi^*(z_1^{\beta_1}\overline{z_2}^{\beta_2}) = \phi \mathbf{B}_\gamma (z_1^{\beta_2}\overline{z_2}^{\beta_2}) = C \phi \cdot  z_1^{\beta_1}z_2^{-\beta_2}.\]
\end{remark}

\begin{proof}[Proof of Unboundedness]
\par To prove unboundedness for $1 \leq q \leq \frac{2m+2n}{m+n+1+m\alpha}$ we claim that the adjoint $T_\alpha^*$ is unbounded for the conjugate range $\frac{2m+2n}{m+n-1-m\alpha} \le p \le \infty$. Then the contrapositive of Lemma \ref{adjointLemma} will finish the proof.\\
\par To prove the claim, we exhibit a bounded monomial function $f$ such that $T_\alpha^*(f) \notin L^p(\mathbb{H}_{m/n})$ for any $ \frac{2m+2n}{m+n-1-m\alpha} \le p \le \infty$. If $\beta_1 \ge 0$ is an integer, let $\ell(\beta_1)$ be the least integer such that $(\beta_1, \ell(\beta_1)) \in \mathcal{A}_{m/n}$. If $0 \le j \le m-1$ is the residue of $\beta_1$ modulo $m$, this is given by 
\begin{align}\ell(\beta_1) &= -1-\frac{n(\beta_1 - j)}{m} - \frac{(j+1)n -1}{m} . \label{leastexp}\end{align} Since $\gcd(n,m)=1$ there exists $\beta_1 \geq 0$ such that $n(\beta_1 +1) \equiv 1 \pmod{m}$. Let $\beta_2 = -\ell(\beta_1) = 1+\frac{n(\beta_1+1)-1}{m}$. Then $f(z) = z_1^{\beta_1}\bar{z_2}^{\beta_2}$ is bounded since $\beta_2>0$. On the other hand by Lemma \ref{RadialToeplitzPolynomial} 
\begin{align*}
    ||T_\alpha^*(z_1^{\beta_1}\overline{z_2}^{\beta_2})||_p^p &= \int_{\mathbb{H}_{m/n}}\big{|}|{z_2}|^{\alpha}C z_1^{\beta_1}z_2^{-\beta_2}\big{|}^p \, \dd V(z)\\
    &\approx\int_0^1 \int_0^{r_2^{n/m}}r_1^{p\beta_1 + 1}r_2^{p(\alpha - \beta_2)+1} \, \dd r_1 \, \dd r_2\\
    &\approx \int_0^1 r_2^{\frac{n}{m}(p\beta_1+2) + p\alpha - p\beta_2+1} \, \dd r_2
\end{align*}
which diverges exactly when
\[
\frac{n}{m}(p\beta_1+2) + p\alpha - p\beta_2+1 \le -1.
\]
Plugging in $\beta_1$ and $\beta_2 = -\ell(\beta_2) = 1+\frac{n(\beta_1+1)-1}{m}$ this is equivalent to 
\begin{align*}
    p\left( \frac{1-n}{m} - 1 + \alpha \right) & \le -2 -\frac{2n}{m}.
\end{align*}
The expression in parenthesis is negative if $ \alpha < \frac{m+n-1}{m}$ so we get 
\[
p \geq \frac{2n+2m}{m+n-1-m\alpha}.
\]
Therefore $T_\alpha^*(f) \notin L^p(\mathbb{H}_{m/n})$. Moreover if $\alpha = \frac{m+n-1}{m}$ then $\frac{2n+2m}{m+n-1-m\alpha} = \infty$, and it is clear that $T_\eta^*f \notin L^\infty(\mathbb{H}_{m/n})$. This proves the claim.\\

\par Since $|\mathbf{B}_{m/n}f (z)| \approx |T_\alpha f (z)|$, the same calculation with $\alpha=0$ shows that $||T_\alpha(f)||_q < \infty$ only if $q \ge \frac{2n+2m}{m+n-1}$. We conclude that $T_\alpha$ maps $L^p(\mathbb{H}_{m/n})$ to $A^p(\mathbb{H}_{m/n})$ boundedly if and only if $ \frac{2m+2n}{m+n+1+m\alpha} < p < \frac{2m+2n}{m+n-1}$.
\end{proof}
We note that with Lemma \ref{RadialToeplitzPolynomial}, the same argument for unboundedness applies for any positive bounded radial symbol $\phi$. For irrational $\gamma$, the argument in Section 6 of \cite{edneal} applies verbatim. We summarize these observation in the following corollary.
\newpage 

\begin{corollary}\label{radialthresh}
Let $\phi: \mathbb{H}_{\gamma} \to \R$ be a positive bounded radial function. 
\begin{enumerate}[(a)]
    \item If $\gamma$ is rational, then $T_\phi$ does not map $L^p(\mathbb{H}_\gamma)$ to $A^p(\mathbb{H}_\gamma)$ for any $p \ge \frac{2m+2n}{m+n-1}$.
    \item  If $\gamma$ is irrational,  then $T_\phi$ does not map $L^p(\mathbb{H}_\gamma)$ to $A^p(\mathbb{H}_\gamma)$ for any $p>2$.
\end{enumerate}
\end{corollary}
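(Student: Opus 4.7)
For part (a), my plan is to repeat the unboundedness half of the proof of Theorem \ref{ToeplitzA} with only cosmetic changes, replacing the symbol $|z_2|^\alpha$ by the general $\phi$. I choose $\beta_1\ge 0$ such that $n(\beta_1+1)\equiv 1 \pmod m$ and set $\beta_2 = 1+\tfrac{n(\beta_1+1)-1}{m}$, so that $(\beta_1,-\beta_2)$ sits on the boundary of $\mathcal{A}_{m/n}$ and $f(z)=z_1^{\beta_1}\bar{z}_2^{\beta_2}\in L^\infty(\mathbb{H}_{m/n})$. Lemma \ref{RadialToeplitzPolynomial} gives $T_\phi f = C\, z_1^{\beta_1}z_2^{-\beta_2}$, and inspecting its proof identifies
\[
C = \frac{(2\pi)^2}{c_{(\beta_1,-\beta_2)}^2}\int_H \phi(r_1,r_2)\, r_1^{2\beta_1+1}\, r_2^{1-2\beta_2}\,\dd r_1\,\dd r_2,
\]
which is strictly positive because $\phi>0$. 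Integrating $|T_\phi f|^p$ in polar coordinates then reproduces the displayed computation from the unboundedness part of Theorem \ref{ToeplitzA} with $\alpha$ replaced by $0$, and the resulting radial integral diverges precisely for $p\ge \frac{2m+2n}{m+n-1}$. Thus $f\in L^p(\mathbb{H}_{m/n})$ while $T_\phi f\notin L^p(\mathbb{H}_{m/n})$, proving (a).

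For part (b), I would invoke the parallel unboundedness argument for the Bergman projection carried out in \cite[Section 6]{edneal}. That argument exhibits, for every $p>2$, a bounded monomial $f = z_1^{\beta_1}\bar{z}_2^{\beta_2}$ with $(\beta_1,-\beta_2)\in \mathcal{A}_\gamma$ for which $\mathbf{B}_\gamma f = c\, z_1^{\beta_1}z_2^{-\beta_2}$ fails to lie in $L^p(\mathbb{H}_\gamma)$; the key geometric input is that for irrational $\gamma$ one can push extremal indices $(\beta_1,-\beta_2)\in \mathcal{A}_\gamma$ arbitrarily close to the boundary of integrability. Since Lemma \ref{RadialToeplitzPolynomial} is proved for arbitrary $\gamma$ (its proof uses only orthogonality of the Laurent monomial basis, not rationality of $\gamma$), the same computation goes through verbatim with $\mathbf{B}_\gamma$ replaced by $T_\phi$: the Toeplitz operator still collapses to a positive scalar multiple of the same monomial, and the same $L^p$-divergence follows.

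The only genuine thing to verify in either part is that the constant $C$ produced by Lemma \ref{RadialToeplitzPolynomial} is nonzero, since otherwise the test function $f$ is mapped to $0$ and no contradiction arises. This is where positivity of $\phi$ is essential: the angular integrations collapse the Bergman-kernel sum to a single term, leaving a radial integral with strictly positive integrand. With $C\neq 0$ secured, everything else is the monomial-integrability calculation already packaged in Theorem \ref{ToeplitzA} and \cite[Section 6]{edneal}, so no new estimates are needed.
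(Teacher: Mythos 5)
Your proposal is correct and follows essentially the same route as the paper, which likewise disposes of the corollary by observing that the unboundedness argument of Theorem \ref{ToeplitzA} (via Lemma \ref{RadialToeplitzPolynomial} and the positivity of the collapsed coefficient $C$) works for any positive bounded radial $\phi$, and that for irrational $\gamma$ the argument of Section 6 of \cite{edneal} applies verbatim. One cosmetic slip: in your formula for $C$ the radial integrand should carry $r_2^{\alpha_2+\beta_2+1}=r_2^{1}$ (with $\alpha_2=-\beta_2$) rather than $r_2^{1-2\beta_2}$, but this does not affect the finiteness or strict positivity of $C$ on which the argument rests.
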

\begin{remark} In case (b), if $\phi$ is bounded away from zero near the boundary then the result also holds for $p<2$. In general $T_\phi$ may be bounded for $p<2$, for example if $\phi$ is compactly supported. We do not know the exact boundedness range for $p<2$ in general because we do not have estimates for the Bergman kernel in the irrational case. 
\end{remark}
\subsection{Proof of Theorem 2.2}
To prove boundedness of $T_\eta$ we will use a different generalization of Schur's lemma.
\begin{lemma}\label{schurlemma2}
Let $\Omega \subset \C^n$ be a bounded domain, $1<p,r < \infty$, and $\frac{1}{p} + \frac{1}{q} =1$. Suppose $K$ is a kernel such that for some $0 \le t \le 1$ and auxiliary function $h$, the following estimates hold
\begin{align} 
    \int_\Omega |K(z,w)|^{tq}|h(w)|^{-\epsilon} dV(w) &\lesssim |h(z)|^{-\epsilon} \qquad \text{for all } \epsilon \in [a,b)\label{genschur1}\\
    \int_\Omega |K(z,w)|^{(1-t)r}|h(z)|^{-\epsilon} dV(z) &\lesssim |h(w)|^{-\epsilon} \qquad \text{for all } \epsilon \in [a',b') \label{genschur2}.
\end{align}
If $\dfrac{a'}{b} <\dfrac{r}{q} < \dfrac{b'}{a}$ then the integral operator $\mathcal{K}: L^p(\Omega) \to L^r(\Omega)$ with kernel $K$ is bounded.
\end{lemma}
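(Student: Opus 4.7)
The plan is to adapt the two-function Hölder argument used in the proof of Lemma \ref{schurlemma} to a three-function decomposition suited to the $L^p\to L^r$ setting. Fix a parameter $s>0$ to be chosen later and split the integrand pointwise as
\begin{align*}
|K(z,w)||f(w)| = \bigl(|K(z,w)|^{t} h(w)^{-s}\bigr)\cdot \bigl(|K(z,w)|^{1-t} h(w)^{s} |f(w)|^{p/r}\bigr)\cdot |f(w)|^{1-p/r}.
\end{align*}
Apply Hölder's inequality in $w$ with the three exponents $(q,r,\alpha)$, where $\alpha=(1/p-1/r)^{-1}$ so that $1/q+1/r+1/\alpha=1$; this is the step that forces $r\ge p$.

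The first factor, raised to $q$ and integrated against $dV(w)$, is bounded by $h(z)^{-s}$ by hypothesis \eqref{genschur1} with $\epsilon = sq\in[a,b)$. The third factor, raised to $\alpha$ and integrated, collapses to $\|f\|_p^{p}$ because $(1-p/r)\alpha=p$. This gives the pointwise estimate
\begin{align*}
|\mathcal{K}f(z)| \lesssim h(z)^{-s}\,\|f\|_p^{1-p/r}\left(\int_\Omega |K(z,w)|^{(1-t)r} h(w)^{sr} |f(w)|^{p}\,dV(w)\right)^{1/r}.
\end{align*}
Raising to the $r$-th power, integrating in $z$, and using Fubini reduces the question to bounding $\int |K(z,w)|^{(1-t)r} h(z)^{-sr}\,dV(z)$, which is $\lesssim h(w)^{-sr}$ by hypothesis \eqref{genschur2} with $\epsilon=sr\in[a',b')$. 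The $h(w)$ factors then cancel and yield $\|\mathcal{K}f\|_r^r \lesssim \|f\|_p^{r-p}\|f\|_p^{p} = \|f\|_p^{r}$.

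What remains is the choice of $s$. The two applications of the hypotheses require $s\in[a/q,b/q)\cap[a'/r,b'/r)$, and this intersection is nonempty exactly when $a'/b<r/q<b'/a$, matching the hypothesis of the lemma. I expect the main subtlety to be selecting the right pointwise decomposition: pushing all of $|f|$ into a single ``free'' factor $|f|^{1-p/r}$ whose Hölder exponent is tuned to $\alpha=(1/p-1/r)^{-1}$ is what makes the remaining two kernel factors carry powers $tq$ and $(1-t)r$ that line up precisely with the hypotheses \eqref{genschur1} and \eqref{genschur2}. Once the decomposition is correct, the argument reduces to bookkeeping that parallels the proof of the classical Schur test.
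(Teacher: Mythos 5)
Your argument is correct, but it runs along a different track from the paper's. The paper keeps the two-factor splitting of Lemma \ref{schurlemma} (Hölder with exponents $p$ and $q$ in the $w$-integral), which produces an inner integral involving $|K|^{(1-t)p}$ rather than $|K|^{(1-t)r}$; to convert the exponent from $p$ to $r$ it then raises the pointwise bound to the power $r/p$ and applies Minkowski's integral inequality before invoking \eqref{genschur2} with $\epsilon = rs$. You instead build the exponent $r$ into the decomposition from the start via a three-factor Hölder with exponents $(q,r,\alpha)$, $\alpha = (1/p-1/r)^{-1}$, peeling off a free factor $|f|^{1-p/r}$ that contributes $\|f\|_p^{1-p/r}$ immediately; after that only Tonelli is needed to finish. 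The bookkeeping is identical at the end: both proofs require $\epsilon = sq \in [a,b)$ and $\epsilon = sr \in [a',b')$, and your observation that the two windows for $s$ overlap exactly when $a'/b < r/q < b'/a$ is the same count the paper makes. Your route is arguably cleaner in that it replaces Minkowski's integral inequality with plain Fubini for nonnegative integrands; both routes share the implicit restriction $r \ge p$ (yours through $\alpha > 0$, the paper's through the exponent $r/p \ge 1$ in Minkowski), which you state explicitly and the paper does not, and which is harmless since the lemma is only ever applied with $r \ge p$.
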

\begin{proof}
In a similar manner to Lemma \ref{schurlemma}, let $s>0$ to be determined. Using \eqref{genschur1} we get
\begin{align*}
    |\mathcal{K}f(z)| &= \left| \int_\Omega K(z,w)^{1-t}|h(w)|^s f(w) K(z,w)^t|h(w)|^{-s} dV(w)\right|\\
    &\le \left( \int_\Omega K(z,w)^{(1-t)p}|f(w)|^p|h(w)|^{sp} dV(w)\right)^{1/p}\left(\int_\Omega K(z,w)^{tq}|h(w)|^{-q s}dV(w)\right)^{1/q}\\
    & \lesssim \left( \int_\Omega K(z,w)^{(1-t)p}|f(w)|^p|h(w)|^{sp} dV(w)\right)^{1/p} |h(z)|^{-s}.
\end{align*}
Therefore using Minkowski's integral inequality and \eqref{genschur2} 
\begin{align*}
||\mathcal{K}f||_r^p & \lesssim \left( \int_\Omega \left( \int_\Omega K(z,w)^{(1-t)p}|f(w)|^p|h(w)|^{sp} |h(z)|^{-sp} dV(w) \right)^{r/p} dV(z) \right)^{p/r}\\
& \le \int_\Omega \left( \int_\Omega K(z,w)^{(1-t)r}|f(w)|^r|h(w)|^{sr} |h(z)|^{-sr}dV(z) \right)^{p/r} dV(w)\\
& = \int_\Omega |f(w)|^p|h(w)|^{ps} \left( \int_\Omega |K(z,w)|^{(1-t)r} |h(z)|^{-rs} dV(z) \right)^{p/r}dV(w)\\
& \lesssim \int_\Omega |f(w)|^p|h(w)|^{ps}|h(w)|^{-ps} dV(w) = ||f||_p^p.
\end{align*}

We must choose $s$ such that \eqref{genschur2} holds with $\epsilon = rs$ and \eqref{genschur1} holds with $\epsilon =qs$. This is possible if and only if $a'<rs<b'$ and $a < qs < b$.
\end{proof}

\begin{proof}[Proof of Theorem 1.2]
Let $K_\eta(z,w) = B_{m/n}(z,w)(|w_2|^{n} - |w_1|^{ m})^\eta(1-|w_2|^2)^\eta$ be the kernel of $T_\eta$. To apply Lemma \ref{schurlemma2} we will verify the estimate
\begin{align}
\int_{\mathbb{H}_{m/n}} |K_\eta(z,w)|^{\rho} |h(w)|^{-\epsilon} dV(w) &\lesssim |h(z)|^{-\epsilon} \label{est}
\end{align}
for suitable $\rho \le \frac{2}{2-\eta}$ and $\epsilon$ to be determined. For this, we claim that $|K_\eta(z,w)|^\rho$ satisfies a type-$(\rho(A-2n) +2n, \rho A)$ estimate, where $A = 2n -1 +\frac{1-n}{m}$ . To see this, note that

\begin{align*}
    |K_{\eta}(z,w)|^{\rho} &\lesssim \frac{|z_2\bar{w}_2|^{\rho A}}{|1-z_2\bar{w}_2|^{2 \rho}|z_2^n\bar{w}_2^n -z_1^m\bar{w}_1^m|^{2 \rho} } (|w_2|^{n} - |w_1|^{m})^{\rho\eta}(1-|w_2|^2)^{\rho\eta}\\
    & = \frac{|z_2|^{\rho A - 2n(\rho - 1)}|\bar{w}_2|^{\rho A}}{|1-z_2\bar{w}_2|^{2}|z_2^n\bar{w}_2^n -z_1^m\bar{w}_1^m|^{2} } \left( \frac{(1-|w_2|^2)^{\rho\eta}}{|1-z_2\bar{w}_2|^{2 (\rho-1)}}\cdot \frac{|z_2|^{2n(\rho-1)} (|w_2|^{n} - |w_1|^{m})^{\rho\eta} }{|z_2^n\bar{w}_2^n -z_1^m\bar{w}_1^m|^{2 (\rho-1)}}\right).
\end{align*}
We claim that the term in the parenthesis is uniformly bounded on $\mathbb{H}_{m/n} \times \mathbb{H}_{m/n}.$ Examining the denominator, we see 
\[|z_2^n\bar{w}_2^n -z_1^m\bar{w}_1^m| \ge (|z_2 \bar{w}_2|^n - |z_1 \bar{w}_1|^m) \ge |z_2|^{n}(|w_2|^n - |w_1|^m).\]
Since $\eta \ge \frac{2(\rho -1)}{\rho}$, $(|w_2|^n -|w_1|^m)^{\rho \eta} \le (|w_2|^n -|w_1|^m)^{2(\rho -1)}$ when $|w_2|^n -|w_1|^m <1$. Therefore
\[\frac{|z_2|^{2n(\rho-1)} (|w_2|^{n} - |w_1|^{m})^{\rho\eta} }{|z_2^n\bar{w}_2^n -z_1^m\bar{w}_1^m|^{2 (\rho-1)}} \le \frac{|z_2|^{2n(\rho-1)} (|w_2|^{n} - |w_1|^{m})^{\rho\eta}}{|z_2|^{2n(\rho -1)}(|w_2|^n - |w_1|^m)^{2(\rho -1)}} \lesssim 1. \]
For the other factor, we have 
$1-|w_2|^2 = (1+|w_2|)(1-|w_2|)
< 2(1-|w_2|) < 2(1 -|z_2||w_2|).$ Hence,
\[ \frac{(1-|w_2|^2)^{\rho \eta }}{|1-z_2 \bar{w}_2|^{2(\rho -1)}} \le \frac{(1-|w_2|^2)^{2(\rho -1)}}{|1-z_2 \bar{w}_2|^{2(\rho -1)}} < 2.\]

This finishes the proof of the claim, and of the estimate
\begin{align}
    |K_{\eta}(z,w)|^{\rho} \lesssim  \frac{|z_2|^{\rho A - 2n(\rho - 1)}|\bar{w}_2|^{\rho A}}{|1-z_2\bar{w}_2|^{2}|z_2^n\bar{w}_2^n -z_1^m\bar{w}_1^m|^{2} }\label{typecd}.
\end{align}

Now the proof of Proposition \ref{cd} shows \eqref{est} holds as well as the analogous statement when the roles of $z$ and $w$ are interchanged. Thus we can apply Lemma \ref{schurlemma2}. We wish to maximize $r$ subject to the constraints
\begin{align}
    tq \le \rho  \quad \text{ and } \quad (1-t)r \le \rho \label{constraint1}
\end{align}
where $\rho$ is fixed. Solving this yields a maximum $r = p+G(\rho)$, where 
\[G(\rho) = \frac{p^2}{\frac{\rho}{\rho-1} -p}.\]
However there is an extra constraint 
\begin{align}
    \dfrac{a'}{b} < \dfrac{r}{q} < \dfrac{b'}{a} \label{r_inequality}
\end{align}
imposed by the range of $\epsilon$. We find $a$, $a'$, $b$, and $b'$ from \eqref{typecd} in the same way as in Proposition \ref{cd}, i.e.
\[a = \frac{\rho(2n - A)}{2n}, \qquad b = \frac{1}{2n}\left(\rho A +\frac{2n}{m}-2n+2\right) \]
\[a' = \frac{2n-\rho A}{2n}, \qquad b' = \frac{1}{2n} \left( \rho(A-2n) +\frac{2n}{m} +2\right).\]
Some algebra shows the right inequality of \eqref{r_inequality} is equivalent to
\begin{align}
    r &<  q\left(\frac{1}{\rho} \cdot \frac{2n+2m}{m+n-1} - 1 \right).\label{constraint2}
\end{align}
Note that \eqref{constraint2} is stronger than \eqref{constraint1} if and only if $ p+G(\rho) \ge M := \frac{2n+2m}{m+n-1}$. Indeed, $p+G(\rho) > M$ implies 
\[ \rho  > \frac{Mp}{M(p-1) +p}. \]
This implies that
\begin{align*}
    q \left( \frac{M}{\rho} -1\right) &< q \left( \frac{M(p-1) +p}{p} -1\right)\\
    &= Mq \cdot \frac{p-1}{p} = M.
\end{align*}
Hence if $p+G(\eta)< M$ the maximum $r$ is $p+G(\eta)$, and if $p+G(\eta) \ge M$ there is no restriction except $r<M.$ Solving these inequalities for $\eta$ shows the upper bound for the proof. \\

To prove the additional boundedness results for small $p$, we have that the left inequality of \eqref{r_inequality} is equivalent to 
\begin{equation*}
    q\left(\frac{2m+2n}{\rho(2mn+1-m-n)-2mn+2m+2n}-1\right) < r.
\end{equation*}
Then, since $m\geq1$ and $n\geq1$, we have that $4m^2n+4mn^2+2mn\eta-2m^2-2m-2n^2-2n\geq0$ with equality only when $m=n=1$ and $\eta=0$. Thus, we have that
\begin{align*}
    0&\leq \frac{4m^2n+4mn^2+2mn\eta-2m^2-2m-2n^2-2n}{(2mn+1-m-n)(2m+2n+2+mn\eta)}\\
    &< \frac{(L+q)2mn-L(2m+2n)}{(L+q)(2mn+1-m-n)}.\\
\end{align*}
Therefore, if $\frac{2mn-2n-2m}{2mn+1-m-n}<\rho<\frac{(L+q)2mn-L(2m+2n)}{(L+q)(2mn+1-m-n)}$ then
\begin{align*}
    \frac{L+q}{q(2m+2n)}&<\frac{1}{\rho(2mn+1-m-n)-2mn+2m+2n}.\\
\end{align*}
Rearranging, we obtain
\begin{align*}
    L&<q\left(\frac{2m+2n}{\rho(2mn+1-m-n)-2mn+2m+2n}-1\right).\\
\end{align*}
Thus, we have that $r>L$ for all $\eta$. However, by the conditions on Lemma \ref{schurlemma2}, $r>1$. Thus, if $\eta\geq\frac{m+n-1}{mn}$, $r>1$ is the only bounding on $r$ and we have proved (a) and (b) of Theorem \ref{ToeplitzN}.\\

We prove unboundedness in the same way as in Theorem 1.1. Let $f(z) = z_1^{\beta_1}\bar{z}_2^{\beta_2}$ be as before, where $\beta_1$ and $\beta_2 = - \ell( \beta_1)$ are defined by \eqref{leastexp}. Then we have
\begin{align*}
    \left|\left|T^*_\eta (z_1^{\beta_1}\overline{z_2}^{\beta_2})\right|\right|_p^p
    &\approx \int_{\mathbb{H}_{m/n}}\left|z_1^{\beta_1}z_2^{-\beta_2}(|z_2|^n - |z_1|^m)^\eta(1-|z_2|^2)^\eta\right|^pdV(z)\\
    &\approx \int_0^1r_2^{-p\beta_2+1+np\eta}(1-r_2^2)^{p\eta}\int_0^{r_2^{n/m}}r_1^{p\beta_1+1}\left(1 - \frac{r_1^m}{r_2^n}\right)^{p\eta} dr_1dr_2\\
    &\gtrsim \int_0^1r_2^{-p\beta_2+1+np\eta}\left[1-p\eta r_2^2\right]\int_0^{r_2^{n/m}}r_1^{p\beta_1+1} - p\eta r_1^{p\beta_1 + 1+ m}r_2^{-n} dr_1dr_2\\
    &= \int_0^1r_2^{-p\beta_2+1+np\eta + \frac{np\beta_1}{m}+\frac{2n}{m}}-p\eta r_2^{-p\beta_2+3+np\eta +     \frac{np\beta_1}{m}+\frac{2n}{m}}dr_2
\end{align*}
where we used the approximation $(1-x)^r = 1-rx+O(x^2)$ for $|x|<1.$
The last integral diverges when
\begin{equation}
    -1\geq -p\beta_2+1+np\eta + \frac{np\beta_1}{m}+\frac{2n}{m}. \label{1.6Sharpness2}
\end{equation}
Plugging in $\beta_2 = -\ell(\beta_1) = 1+\frac{n(\beta_1+1)-1}{m}$ we see \eqref{1.6Sharpness2} becomes
    \[p \ge \frac{2m+2n}{m+n-1-mn\eta}.\]
We conclude that $T_{\eta}^*(f)\notin L^p(\mathbb{H}_{m/n})$ for $\eta < \frac{m+n-1}{mn}$ and $p \geq \frac{2m+2n}{m+n-1-mn\eta}$.\\

The same calculation with $\eta = 0$ shows that $||T_\eta (f)||_p < \infty$ if and only if $p <\frac{2m+2n}{m+n-1}$. Thus, we have shown that $T_\eta$ is unbounded on $L^p(\mathbb{H}_{m/n})$ for $p\in\left[1,\frac{2m+2n}{m+n+1+mn\eta}\right]\cup\left[\frac{2m+2n}{m+n-1},\infty\right)$ for $\eta<\frac{m+n-1}{mn}$ and $T_\eta$ is unbounded on $L^p(\mathbb{H}_{m/n})$ for $p\in\left[\frac{2m+2n}{m+n-1},\infty\right).$
\end{proof}
\begin{remark}
If $\eta \ge 2$ then one can show $T_\eta$ maps $L^1(\mathbb{H}_{m/n})$ to $A^1(\mathbb{H}_{m/n})$ boundedly by showing $T^*_\eta$ is bounded on $L^\infty(\mathbb{H}_{m/n})$. This follows from estimating the kernel of $T_\eta^*(f)$  as in the above proof:
\[|T_\eta^*f(z)| \lesssim  ||f||_\infty \int_{\mathbb{H}_{m/n}}|z_2|^A|w_2|^{A-2n}dV(w)\]
where $A = 2n-1+\frac{1-n}{m}$. Since the integral is finite this proves the claim.
\end{remark}
\section{Further Questions}

In this section, we mention a few possible further  research directions. We plan to address some of these in subsequent projects.

\begin{enumerate}[1.]
    \item Another positive radial symbol of interest is the Bergman kernel on the diagonal $\phi(z) = B(z,z)^{-\eta}$. \cite{khanh} considers Toeplitz operators with these symbols on $\mathbb{H}_k$ for integers $k >0$, and show that a similar smoothing effect as in Theorem 2.2 occurs. They also previously studied the analogous symbols on general psuedoconvex domains in \cite{khanh0}. Since $B_{m/n}(z,z)^{-\eta}$ is a power of the distance to the boundary with extra factors of $|z_1|$ and $|z_2|$, the techniques in this paper might also be used to generalize their result to all rational Hartgos triangles.
    \item There are higher dimensional generalizations of Hartogs triangle that have been studied in the literature. \cite{beberok, hartoggen} consider the domain $\{(z,w) \in \C^n \times \C : ||z|| < |w|^k < 1 \}$ where $||z||$ is the Euclidean norm on $\C^n$. The Bergman projection and Toeplitz operators as above exhibit similar restricted $L^p$ regularity. \cite{chen} considers a different generalization of Hartogs triangle defined using biholomorphic maps. One can also consider domains of the form $\{(z_1,\dots,z_n) \in \C^n: |z_1|^{\gamma_1} < |z_2|^{\gamma_2} < \dots < |z_n|^{\gamma_n}< 1\}$ where $\gamma_1, \dots, \gamma_{n} >0$. This domain has the same kind of boundary singularities as $\mathbb{H}_\gamma$, and it would be interesting to see how the $L^p$ mapping properties of the Bergman projection and Toeplitz operators depend on the exponents and their arithmetic properties.
    \item In addition to $L^p$ estimates, one can investigate similar questions in Sobolev norms or one can investigate compactness of Toeplitz operators. In particular, it is not clear if there is a characterization of the compactness of $T_{\phi}$ on $L^2(\mathbb{H}_{\gamma})$ in terms of the Berezin transform of the symbol $\phi$ as in \cite{axzheng}.
    One can show that if $T_\phi$ is compact on $L^2(\mathbb{H}_{m/n})$ then the Berezin transform vanishes on the boundary. We do not know if the converse holds, but we believe it is likely that there is a counterexample.
\end{enumerate}
\medskip

\section*{Acknowledgements}
This project was completed as a part of the Polymath REU 2020; the authors thank the organizers and the participants of the program. This research was partially sponsored by NSF grant DMS-1659203, NSF grant DGE-1745038, and a grant from the NSA.


\end{document}